%

\documentclass {amsart}
\usepackage{amssymb}




\newtheorem{thm}{Theorem}[section]
\newtheorem{lem}[thm]{Lemma}
\newtheorem{cor}[thm]{Corollary}

\numberwithin{equation}{section}

\newcommand{\ra}{{\rightarrow}}

\newcommand{\gen}{\text{gen}}

\newcommand{\z}{{\mathbb Z}}
\newcommand{\q}{{\mathbb Q}}

\newcommand{\s}{{\mathfrak s}}
\newcommand{\n}{{\mathfrak n}}

\begin{document}
\title{Ternary universal sums of generalized pentagonal numbers}


\author{Byeong-Kweon Oh}
\address{Department of Mathematical Sciences, Seoul National University,
 Seoul 151-747, Korea}
\email{bkoh@math.snu.ac.kr}

\subjclass[2000]{Primary 11E12, 11E20}

\keywords{generalized polygonal numbers, universal sums}



\begin{abstract}
For any $m\ge3$, every integer of the form $p_m(x)=\frac{(m-2)x^2-(m-4)x}2$ with $x \in \z$
is said to be a generalized $m$-gonal number. Let $a\le b\le c$ 
be positive integers. For every non negative integer $n$, if there are 
integers $x,y,z$ such that $n=ap_k(x)+bp_k(y)+cp_k(z)$, then  the quadruple $(k;a,b,c)$
is said to be {\it universal}. Sun gave in \cite{s1} all possible quadruple candidates that are universal and
proved some quadruples to be universal (see also \cite{gs}). He remains the following 
quadruples $(5,1,1,k)$ for $k=6,8,9,10$, $(5,1,2,8)$, and $(5,1,3,s)$ for $7 \le s \le 8$ as candidates and
conjectured the universality of them.
In this article we prove that the remaining $7$ quadruples given above are, in fact, universal. 
\end{abstract}

\maketitle

\section{Introduction}
Let $m$ be any positive integer greater than two. 
An integer of the form $p_m(x)=\frac{(m-2)x^2-(m-4)x}2$ for any non negative integer $x$ is said to be a {\it polygonal number of order $m$} (or $m$-gonal number).
If the variable $x$ is an integer, $p_m(x)$ is called a {\it generalized polygonal number of order $m$} (or generalized $m$-gonal number). 
From the definition every $m$-gonal number is a generalized $m$-gonal number and the converse is also true for $m=3$ or $4$. However
if $m$ is greater than $4$, the set of all $m$-gonal numbers is a proper subset of the set of all generalized $m$-gonal numbers. 

In 1638 Fermat asserted that every non negative integer can be written as a sum of $m$ polygonal numbers of order $m$. This 
was proved by Lagrange, Gauss and Cauchy in the cases $m=4$, $m=3$ and $m \ge 5$, respectively (see Chapter 1 of \cite{n}). 

The polygonal number theorem stated above was generalized in many directions. 
For example Lagrange's four square theorem was generalized to find all quaternary quadratic forms that represent all non negative integers. 
Recently Bhargava and Hanke in \cite{bh} finally completed this problem by proving, so called, 290-theorem, which is a generalization of the Conway and Schneeberger's 15-theorem (see \cite{b}). 
Gauss' triangular theorem was first generalized by Liouville. 
To state the theorem more precisely we begin with defining the terminology {\it universality}. 
Let $a\le b\le c$ be positive integers. A ternary sum of polygonal numbers $ap_i(x)+bp_j(y)+cp_k(z)$ is said to be
{\it universal over $\mathbb N$} if the equation $n=ap_i(x)+bp_j(y)+cp_k(z)$ has a non negative integer solution $x,y,z$ for any non negative integer $n$. 
In particular
if it has an integer solution $x,y,z$, it is said to be {\it universal over $\z$}.    
In 1862 Liouville determined all ternary universal sums of polygonal numbers in the case when $i=j=k=3$. When $i,j,k \in \{3,4\}$, 
this problem was done by Sun and his collaborators (see \cite{gps}, \cite{os} and \cite{s1}). 
Recently Sun gave in \cite{s2} a complete list of candidates of all possible ternary universal sums of (generalized) polygonal numbers.  
In particular he proved that there are at most 20 ternary universal sums over $\z$ which is of the form $P^k_{a,b,c}(x,y,z)=ap_k(x)+bp_k(y)+cp_k(z)$ 
for $k=5$ or $k \ge 7$, and conjectured that these are all universal over $\z$. 
They are, in fact, $k=5$ and 
$$
\begin{array} {lll}
(a,b,c)=& (1,1,s) \quad 1\le s \le 10, \ \ s\ne 7,\\
&(1,2,2), \  \ (1,2,3), \  \ (1,2,4), \ \ (1,2,6), \ \ (1,2,8),\\
&(1,3,3), \  \ (1,3,4), \ \ (1,3,6), \  \ (1,3,7), \  \ (1,3,8), \ \ (1,3,9).\\
\end{array}
$$ 
Guy realized in \cite{g} that $P^5_{1,1,1}$ is universal over $\z$ (for the complete proof, see \cite{s2}).   
In the same article \cite{s2} as above, Sun also proved the universality on the cases when $k=5$ and
$$
(a,b,c)=(1,1,2), \ \ (1,1,4), \ \ (1,2,2), \ \ (1,2,4), \ \ (1,1,5), \ \ (1,3,6).
$$ 
Note that the set of generalized pentagonal numbers is equal to the set of triangular numbers.  Shortly after publishing this result
Ge and Sun   proved in \cite{gs} the universality on the cases  when $k=5$ and
$$
(a,b,c)=(1,1,3), \ \ (1,2,3), \ \ (1,2,6), \ \ (1,3,3), \ \ (1,3,4), \ \ (1,3,9).
$$
Therefore it remains seven candidates of ternary sums of pentagonal numbers which are universal over $\z$. 

In this article we prove that the remaining seven candidates are all universal over $\z$. One may easily show that
$P^5_{1,b,c}(x,y,z)=n$ has a solution over $\z$ if and only if
\begin{equation}
(6x-1)^2+b(6y-1)^2+c(6z-1)^2=24n+b+c+1
\end{equation}
has a solution over $\z$. If an integer $w$ is relatively prime to $6$, then $w$ or $-w$ is congruent to $-1$ modulo $6$. 
Therefore $P^5_{1,b,c}$ is universal over $\z$ if and only if the ternary quadratic form 
\begin{equation}
x^2+by^2+cz^2=24n+b+c+1 
\end{equation} 
has an integer solution $x,y,z$ with $\gcd(xyz,6)=1$. In many cases the problem can be reduced to the problem of representations  
of ternary quadratic forms without congruence condition on the solution. For example assume that $\gcd(b+c+1,6)=1$. 
Then the equation (1.1) has an integer solution if and only if the equation
\begin{equation}
x^2+b(x-6y)^2+c(x-6z)^2=24n+b+c+1
\end{equation}
has an integer solution. Under this situation we use the method on the representations of ternary forms which is developed in \cite{o1}.
It seems to be quite a difficult problem to determine whether or not $p^k_{a,b,c}$ is universal over $\mathbb N$ for $k\ge 5$.

The term lattice will always refer to an integral $\z$-lattice on an
$n$-dimensional positive definite quadratic space over $\q$. The
scale and the norm ideal of a lattice $L$ are denoted by $\s(L)$ and
$\n(L)$ respectively. Let $L=\z x_1+\z x_2+\cdots +\z x_n$ be a
$\z$-lattice of rank $n$.
 We write
 $$
 L \simeq (B(x_i,x_j)).
$$
The right hand side matrix is called a {\it matrix presentation} of
$L$. If $(B(x_i,x_j))$ is diagonal, then we simply write $L \simeq \langle Q(x_1),\dots,Q(x_n)\rangle$.

Throughout this paper, we always assume that every $\z$-lattice $L$
is {\it positive definite} and is {\it primitive} in the sense that
$\s(L)=\z$. 

For any $\z$-lattice $L$, $Q(\gen(L))$ ($Q(L)$) is denoted by the
set of all integers that are represented by the genus of $L$ ($L$
itself, respectively). In particular, we call an integer $a$ {\it
eligible} if $a \in Q(\gen(L))$ following Kaplansky. For any integer $a$, $R(a,L)$
is denoted by the set of all vectors $x \in L$ such that $Q(x)=a$ and $r(a,L)=\vert R(a,L)\vert$.

Any unexplained notations and terminologies can be found in \cite{ki} or \cite{om2}.

\section{General Tools}

For a positive integer $d$ and a non-negative integer $a$, we define
$$
S_{d,a}=\{ dn+a \mid n \in \z^{+}\cup \{0\} \}.
$$
A $\z$-lattice $L$ is called {\it $S_{d,a}$-universal} if it
represents every integer in the set $S_{d,a}$. $L$ is called  {\it
$S_{d,a}$-regular} if it represents every integer in $S_{d,a}$ that
is represented by the genus of $L$, and at least one integer in
$S_{d,a}$ is represented by the genus of $L$. 
Hence we have
$$
\text{$L$ is $S_{d,a}$-universal  if and only if $S_{d,a} \subset
Q(L)$}
$$
and
$$
\text{$L$ is $S_{d,a}$-regular if and only if $\varnothing \ne
Q(\gen(L))\cap S_{d,a} \subset Q(L)$}.
$$
For the finiteness theorem of ternary $S_{d,a}$-universal and $S_{d,a}$-regular lattices, see \cite{o2}.

Let $M$ and $N$ be ternary $\z$-lattices on the quadratic space $V$. For any positive integer $d$ and an integer $a$ such that $0\le a<d$, we define
$$
R(N,d,a)=\{ x \in N/dN : Q(x) \equiv a \pmod {d}\}
$$
and 
$$
R(M,N,d)=\{ \sigma \in O(V) : \sigma(dN) \subset M\}.
$$
A vector $x \in R(N,d,a)$ is said to be  {\it good} (with respect to $M,N,d$ and $a$) if 
there is a $\sigma \in R(M,N,d)$ such that $\sigma(\widetilde{x}) \in M$ for any $\widetilde{x} \in N$ satisfying $\widetilde{x}  \equiv x \pmod d$. The subset of all 
good vectors in $R(N,d,a)$ is denoted by $R_M(N,d,a)$. If $R(N,d,a)=R_M(N,d,a)$,  
then we write
$$
N \prec_{d,a} M.
$$
Note that if  $N \prec_{d,a} M$, then as stated in \cite{o1},
$$
S_{d,a} \cap Q(N) \subset Q(M).  
$$
 Note that the converse is not true in general. One may also easily show that
\begin{itemize}

\item [(i)] if $N$ is $S_{d,a}$-universal, so is $M$;

\item [(ii)] if the assumption holds for any $N \in \gen(M)$ and $S_{d,a} \cap Q(\gen(M)) \ne \varnothing$, then $M$ is $S_{d,a}$-regular;

\item [(iii)] if $M \ra \ \gen(N)$, $S_{d,a} \cap Q(\gen(M)) \ne \varnothing$, and $N$ is $S_{d,a}$-regular, then so is $M$.    
 
\end{itemize}

\begin{lem} \label{lem1} Let $V$ be a ternary quadratic space. For an isometry $\sigma \in O(V)$, define
$$
V_{\sigma}=\{ x \in V : \text{there is a positive integer $k$ such that } \sigma^k(x)=x\}.
$$   
If $\sigma$ has an infinite order, then $V_{\sigma}$ is one dimensional subspace of $V$ and $\sigma(x)=\det(\sigma)x$ for any $x \in V_{\sigma}$. 
\end{lem}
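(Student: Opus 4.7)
My plan is to analyze the eigenvalue structure of $\sigma$ after extending scalars from $\q$ to $\mathbb R$, and then to read off $V_\sigma$ as the $\det(\sigma)$-eigenspace. Since $V$ is ternary and positive definite, the natural extension of $\sigma$ to $V \otimes_{\q} \mathbb R$ is a real orthogonal transformation of a three dimensional Euclidean space, and over $\mathbb C$ it becomes unitary and hence diagonalizable with all eigenvalues of absolute value one. Because the characteristic polynomial of $\sigma$ lies in $\q[x]$ and has odd degree three, it has at least one real root $\epsilon \in \{\pm 1\}$.

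The crucial step is to establish that, under the infinite order hypothesis, the three eigenvalues of $\sigma$ are necessarily $\epsilon,\,e^{i\theta},\,e^{-i\theta}$ with $\theta/\pi$ irrational and $\epsilon = \det(\sigma)$. Indeed, if all three eigenvalues were real they would all lie in $\{\pm 1\}$, so $\sigma^2 = I$ would hold and $\sigma$ would have order at most two. The remaining two eigenvalues therefore form a genuine complex conjugate pair $e^{\pm i\theta}$, and the product relation $\epsilon \cdot e^{i\theta} \cdot e^{-i\theta} = \det(\sigma)$ forces $\epsilon = \det(\sigma)$. If $\theta/\pi$ were rational, then some power $\sigma^N$ would have all three eigenvalues equal to one; being orthogonal (hence diagonalizable over $\mathbb C$) and unipotent, such a $\sigma^N$ would equal the identity, again contradicting the infinite order assumption. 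I expect this rationality argument to be the main technical point of the proof.

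The conclusion follows quickly from here. For every $k \ge 1$ the eigenvalues of $\sigma^k$ are $\epsilon^k,\,e^{ik\theta},\,e^{-ik\theta}$, and the irrationality of $\theta/\pi$ forces $e^{\pm ik\theta} \ne 1$. So the value $1$ can be an eigenvalue of $\sigma^k$ only via $\epsilon^k = 1$, and in that case the one-eigenspace of $\sigma^k$ coincides with the $\epsilon$-eigenspace $L$ of $\sigma$ itself, which is one dimensional since the three eigenvalues of $\sigma$ are distinct. Consequently each $\ker(\sigma^k - I)$ equals either $\{0\}$ or $L$, so $V_\sigma = L$. Since $\epsilon \in \q$ the line $L$ is defined over $\q$, and $\sigma$ acts on it by multiplication by $\epsilon = \det(\sigma)$, proving the lemma.
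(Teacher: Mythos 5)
Your proof is correct, but it takes a genuinely different route from the paper's. The paper argues synthetically and stays over $\q$: if two linearly independent vectors $x,y$ satisfied $\sigma^k(x)=x$ and $\sigma^s(y)=y$, then $\sigma^{ks}$ would fix a two-dimensional subspace pointwise, hence be the identity or a reflection and so have order at most two, contradicting the infinite order of $\sigma$; this bounds $\dim V_\sigma\le1$, and the standard fact that every isometry of a ternary space has $\det(\sigma)$ as an eigenvalue supplies the line itself. You instead extend scalars to $\mathbb R$, diagonalize over $\mathbb C$, and pin down the spectrum as $\det(\sigma),e^{\pm i\theta}$ with $\theta/\pi$ irrational, then read off $V_\sigma$ as the rational $\det(\sigma)$-eigenline. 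Your argument is longer but more self-contained (it proves rather than quotes the eigenvalue fact) and yields extra information, namely the irrationality of the rotation angle and the fact that $\ker(\sigma^k-I)$ is literally the same line for every even $k$; the paper's argument is shorter and avoids any extension of scalars, reducing everything to the observation that an isometry of a ternary space fixing a plane pointwise has order at most two. One cosmetic remark: when you claim some $\sigma^N$ has all eigenvalues equal to one, you should take $N$ even so that $\epsilon^N=1$ as well, but since $\epsilon=\pm1$ this is immediate and does not affect the argument.
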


\begin{proof} If $\sigma^k(x)=x$ and $\sigma^s(y)=y$ for some $s$ and $y \not \in \q x$. Then the fixed space
of $\sigma^{ks}$ is two dimensional and hence it is a reflection. This contradicts the fact that $\sigma$ 
has an infinite order. Note that every isometry $\sigma$ of a ternary quadratic space has an eigenvalue $\det(\sigma)$. 
\end{proof}

\begin{cor} \label{cor1} Under the same notations as above, assume that there is a partition $R(N,d,a)-R_M(N,d,a)=P_1\cup\cdots\cup P_k$
satisfying the following properties: for each $i=1,\dots,k$, there is a $\tau_i \in O(V)$ such that

\noindent {\rm (i)} $\tau_i$ has an infinite order;
 
\noindent {\rm (ii)} $\tau_i(dN) \subset N$;

\noindent {\rm (iii)} $\tau_i(x) \in N$ and  $\tau_i(x) (\text{mod } d) \in P_i \cup R_M(N,d,a)$ for any $x \in P_i$.

Then we have
$$     
(S_{d,a} \cap Q(N))-\cup_{i=1}^k Q(z_i)\z^2 \subset Q(M),
$$
where the vector $z_i$ is a primitive vector in $N$ which is an eigen vector of $\tau_i$.   
\end{cor}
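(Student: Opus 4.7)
The plan is to extend the trivial implication \emph{vector with good residue yields $n\in Q(M)$}, which is the content of the definition of $R_M(N,d,a)$, to vectors with bad residue by flowing them through the $\tau_i$'s into good residues, and to use Lemma \ref{lem1} to identify exactly when such a flow can fail to terminate.

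Fix $n\in S_{d,a}\cap Q(N)$ with $n\notin Q(z_i)\z^2$ for every $i\in\{1,\dots,k\}$, and choose $v_0\in N$ with $Q(v_0)=n$. If $v_0\bmod d\in R_M(N,d,a)$, then by the definition of a good vector there exists $\sigma\in R(M,N,d)$ with $\sigma(v_0)\in M$, so $n=Q(\sigma(v_0))\in Q(M)$ and we are done. Otherwise $v_0\bmod d\in P_i$ for some $i$, and I set $v_1:=\tau_i(v_0)$: by hypothesis (iii) we have $v_1\in N$ and $v_1\bmod d\in P_i\cup R_M(N,d,a)$, while $Q(v_1)=n$ because $\tau_i$ is an isometry. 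If $v_1\bmod d\in R_M(N,d,a)$ we conclude as above; otherwise I iterate, producing a sequence $v_0,v_1,v_2,\dots\in N$ of constant norm $n$ whose residues remain in $P_i$ until the first moment (if any) they enter $R_M(N,d,a)$. Note that (iii) precludes jumping from $P_i$ into some $P_j$ with $j\ne i$ without first passing through $R_M(N,d,a)$, so staying inside a single $\tau_i$-flow is consistent.

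The main obstacle, and the only step requiring an idea, is to show that this iteration must actually terminate. Because $N$ is positive definite, the set $\{w\in N : Q(w)=n\}$ is finite, so a non-terminating sequence would have to repeat: $v_j=v_{j+\ell}$ for some $j\ge 0$ and $\ell\ge 1$. Since $\tau_i$ is invertible, rearranging gives $\tau_i^{\,\ell}(v_0)=v_0$, and Lemma \ref{lem1} (applied to the infinite-order isometry $\tau_i$) then forces $v_0\in V_{\tau_i}=\q z_i$. The primitivity of $z_i$ as a vector in $N$ yields $v_0=m z_i$ with $m\in\z$, and hence $n=m^2Q(z_i)\in Q(z_i)\z^2$, contradicting the choice of $n$. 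Thus some iterate $v_s$ has residue in $R_M(N,d,a)$, which by the first paragraph gives $n\in Q(M)$ and proves the corollary.
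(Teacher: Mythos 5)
Your proof is correct and follows essentially the same route as the paper: iterate the relevant $\tau_i$ on a representing vector, invoke the finiteness of $R(n,N)$ for a positive definite ternary lattice to force periodicity, and then use Lemma \ref{lem1} to conclude the vector lies on the eigenline $\q z_i$, which is exactly the excluded case $n\in Q(z_i)\z^2$. The only difference is cosmetic: the paper reduces to $k=1$ and states the dichotomy more tersely, while you spell out the pigeonhole step and the use of primitivity of $z_i$.
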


\begin{proof} Without loss of generality we may assume that $k=1$. 
Assume that $Q(x) \equiv a \pmod d$ for $x \in N$. If $x (\text{mod } d) \in R_M(N,d,a)$,
then $Q(x)$ is represented by $M$. So we may assume that $x (\text{mod } d) \in R(N,d,a)-R_M(N,d,a)$. 
Then $\tau_1(x) \in N$ from the assumption. Hence if $\tau_1(x) (\text{mod } d) \in R_M(N,d,a)$, then we are done,
otherwise $\tau_1^2(x) \in N$.
Inductively we may assume that $\tau_1^m(x) \not \in R_M(N,d,a)$ for any positive integer $m$. Therefore 
if $x$ is not an eigen vector of $\tau_1$, the set $\{ \tau_1^m(x) \in N : m\ge 0\}$ is an infinite set by Lemma \ref{lem1}. This is a contradiction
to the fact that the number of representations of any integer by a ternary form is finite.    
\end{proof}

Finding all ternary $\z$-lattices satisfying $Q(\gen(L))=Q(L)$, which are called {\it regular lattices}, has long and rich history. 
In 1997 Kaplansky and his collaborators \cite{jks}  provided a list of $913$ candidates of primitive positive definite
regular ternary  forms up to equivalence and stated that there are no others. All but
$22$ of $913$ are already verified to be regular. Recently the author proved in \cite{o1} the regularity of eight ternary lattices among
$22$ candidates. There are also some examples of ternary lattices satisfying $\vert Q(\gen(L))-Q(L)\vert=1$ (see, for example, \cite{j}).
 However there  are no known examples of ternary lattices such that $\vert Q(\gen(L))-Q(L)\vert\ge2$ and $Q(L)$ is completely determined
 except Ono and Soundararajan's result  \cite{os1}. In that article they completely determined the set of all integers that are 
represented by the Ramanujan's ternary form $\langle 1,1,10\rangle$ under the generalized Riemann hypothesis for some Dirichlet $L$-functions.    
The following theorem provides such an example without any assumption. 

 Every computation such as $R(N,d,a)$ and $R_M(N,d,a)$ for some $M, N, d$ and $a$ was done by the computer program MAPLE.   
 
\begin{thm} \label{thm1} For the ternary quadratic form $M=\langle 1\rangle \perp \begin{pmatrix} 9&3\\3&10\end{pmatrix}$,
$$
Q(M)=Q(\gen(M))-\{ 2\cdot2^{2m}, 5\cdot2^{2n} : m,n\ge 0\}.
$$  
\end{thm}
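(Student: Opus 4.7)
The plan is to split the theorem into a non-representation direction and a representation direction.

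For non-representation I would use a $2$-descent. Expanding modulo $4$,
$$
x^{2}+9y^{2}+6yz+10z^{2}\equiv x^{2}+(y+z)^{2}+z^{2}\pmod{4},
$$
and since a sum of three squares vanishes modulo $4$ only when each summand does, any $(x,y,z)$ with $Q(x,y,z)\equiv 0\pmod{4}$ must have $x$, $z$, and $y+z$ all even, so $(x,y,z)\in 2\z^{3}$. Therefore $4n\in Q(M)$ if and only if $n\in Q(M)$, and the question whether $M$ represents $2\cdot 4^{m}$ (respectively $5\cdot 4^{n}$) reduces to the base case $n=2$ (respectively $n=5$). Both base cases are eliminated by finite inspection, as $x^{2}\le 5$ and $9y^{2}+6yz+10z^{2}\le 5$ leave only a handful of triples, none of which work. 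A standard local computation at $p=2$ and $p=3$ (the only primes dividing $\det M=81$) confirms that all members of the exceptional set nevertheless lie in $Q(\gen(M))$.

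For representation, the descent lets me restrict attention to eligible $n$ with $4\nmid n$ and $n\notin\{2,5\}$. The main tool is Corollary~\ref{cor1}. I would choose an auxiliary lattice $N$ (either another representative of $\gen(M)$, or a sublattice easier to analyse) together with small moduli $(d,a)$, say $d\in\{4,8\}$, with $a$ cycling through the eligible residue classes with $4\nmid a$. Using MAPLE as in the remark preceding the theorem, I would compute $R(N,d,a)$ and $R_{M}(N,d,a)$ and identify the bad residues. For each bad class I would construct an infinite-order isometry $\tau_{i}\in O(V)$, most naturally a product of two reflections, such that $\tau_{i}(dN)\subset N$ and $\tau_{i}$ cycles the bad residues either into $R_{M}(N,d,a)$ or along an eigenline. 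By Lemma~\ref{lem1} the primitive eigenvector $z_{i}$ is then unique up to sign, and the aim is to produce two such maps with $Q(z_{1})=2$ and $Q(z_{2})=5$, yielding via Corollary~\ref{cor1} the inclusion
$$
(S_{d,a}\cap Q(N))\setminus\bigl(2\z^{2}\cup 5\z^{2}\bigr)\subseteq Q(M).
$$
The residual values $2k^{2}$ and $5k^{2}$ with $k$ possessing an odd prime factor would then be supplied by explicit constructions (combined with the descent, it suffices to treat $k$ odd and $k>1$) or by a second application of Corollary~\ref{cor1} with different parameters.

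The main obstacle will be the search for the isometries $\tau_{i}$ and the verification of hypothesis~(iii) of Corollary~\ref{cor1}; the outcome depends sensitively on the choice of $N$ and of $(d,a)$, and is essentially a combinatorial computer search. A secondary subtlety is refining the obstruction set from the full $2\z^{2}\cup 5\z^{2}$ down to the sharp subsequences $\{2\cdot 4^{m}\}\cup\{5\cdot 4^{n}\}$; this is where the descent of the first part is reused, transferring representations of $2k^{2}$ and $5k^{2}$ with $k$ odd and $k>1$ to all their $4$-power multiples and thereby pinning the exceptional set to exactly the one claimed.
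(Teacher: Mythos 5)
Your overall skeleton --- a $2$-adic descent showing $Q(v)\equiv 0\pmod 4$ forces $v\in 2\z^3$ (hence $4n\in Q(M)\Leftrightarrow n\in Q(M)$ and $2,5\notin Q(M)$ settles the whole exceptional set), plus Corollary~\ref{cor1} with infinite-order isometries whose eigenlines carry the values $2t^2$ and $5t^2$ --- is the same mechanism the paper uses, and your mod $4$ identity $Q\equiv x^2+(y+z)^2+z^2$ is a clean way to phrase the paper's closing remark that $M_2$ is anisotropic. The organizational difference is that the paper splits eligible $a$ by residue mod $3$ rather than mod $4$ or $8$: since $\det M=3^4$ (note $2\nmid 81$, so your parenthetical about the primes dividing $\det M$ is off), the case $a\equiv 0\pmod 3$ forces $9\mid a$ and reduces to representing $a/9$ by $\langle 1,1,9\rangle$ via a sublattice of $M$; the case $a\equiv 1\pmod 3$ is dispatched by $N=\langle 1,1,9\rangle\prec_{3,1}M$ with $h(N)=1$; only $a\equiv 2\pmod 3$ needs the eigenline machinery, with $N=\begin{pmatrix}2&1\\1&5\end{pmatrix}\perp\langle 9\rangle$ and $d=3$. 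This matters for a reason you should make explicit: Corollary~\ref{cor1} concludes with $S_{d,a}\cap Q(N)$, not $S_{d,a}\cap Q(\gen(M))$, so the auxiliary $N$ must itself represent every eligible integer in the class; taking ``another representative of $\gen(M)$'' with $h(M)=3$ does not automatically give you that, whereas the paper's choices do.

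The genuine gap is the residual values $2t^2$ and $5t^2$ with $t$ having an odd prime factor $p\ge 5$. You defer this to ``explicit constructions'' or ``a second application of Corollary~\ref{cor1} with different parameters,'' but neither is a proof, and a second application is problematic: any new $\tau$ will again have an eigenline, and you would need its eigenvalue $Q(z)$ to avoid producing a new obstruction inside $2\z^2\cup 5\z^2$, which is exactly what you cannot control a priori. The paper's actual idea here is quantitative: by the Minkowski--Siegel formula, $r(2p^2,N)=2p+2-2\left(\frac{-2}{p}\right)$, $r(5p^2,N)=4p+4-4\left(\frac{-2}{p}\right)$, and $r(125,N)=24$, while the representations lying on the eigenlines number only a bounded handful; hence for every such $t$ there is a representation off the eigenlines, which Corollary~\ref{cor1} then pushes into $Q(M)$. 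Without this counting step (or some substitute identity), your argument only yields $Q(\gen(M))\setminus(2\z^2\cup 5\z^2)\subset Q(M)$, which is strictly weaker than the theorem. Your use of the descent to shrink $2\z^2\cup 5\z^2$ down to $\{2\cdot 4^m\}\cup\{5\cdot 4^n\}$ is correct once the residual values are handled, but it cannot replace that step.
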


\begin{proof}  Note that $h(M)=3$. Let $a \in Q(\gen(M))$. First assume that $a$ is divisible by $3$. 
Since $M_3 \simeq \langle 1,1,3^4\rangle$, $a=9b$ for a positive integer $b$. Furthermore since $b$ is represented by 
$\langle 1,1,9\rangle \simeq \langle 1 \rangle \perp \begin{pmatrix} 1&1\\1&10\end{pmatrix}$, $a=9b$ is represented by
$\langle 9 \rangle \perp \begin{pmatrix} 9&9\\9&90\end{pmatrix}$, which is a subform of $M$. Therefore
$a$ is represented by $M$. 

Assume that  $ a \equiv 1 \pmod 3$. If we define $N=\langle 1,1,9\rangle$, then
$$
R(N,3,1)=\{(0,\pm1,s),(\pm1,0,t) : s,t \in \mathbb F_3\},
$$
and
$$
\left\lbrace\begin{pmatrix}0&1&0\\\frac13&0&\frac{-1}3\\0&0&1\end{pmatrix}, \begin{pmatrix}0&1&0\\\frac13&0&\frac13\\0&0&-1\end{pmatrix},
\begin{pmatrix}1&0&0\\0&\frac{-1}3&\frac13\\0&0&-1\end{pmatrix},\begin{pmatrix}1&0&0\\0&\frac13&\frac13\\0&0&-1\end{pmatrix}\right\rbrace \subset R(M,N,3).
$$
In fact $\vert R(M,N,3)\vert=16$, however we need only four given above. One may easily show that $R(N,3,1)=R_M(N,3,1)$ and 
hence $N \prec_{3,1} M$. Since $h(N)=1$, $a$ is represented by $N$ and 
is also represented by $M$.

Finally assume that $a \equiv 2 \pmod 3$. Let $N=\begin{pmatrix} 2&1\\1&5\end{pmatrix} \perp \langle9\rangle.$
Note that $N$ represents $a$. In this case we have 
$$
R(N,3,2)=\{ (0,\pm1,s), (\pm1,0,t),(\pm1,\pm2,u) : s,t,u \in \mathbb F_3\}
$$
and
$$
\left\lbrace\begin{pmatrix}-1&1&0\\\frac13&\frac23&\frac13\\0&0&-1\end{pmatrix},\begin{pmatrix}1&-1&0\\\frac{-1}3&\frac{-2}3&\frac13\\0&0&-1\end{pmatrix}\right\rbrace \subset R(M,N,3).
$$
Therefore we have
$$
R(N,3,2)-R_M(N,3,2)=\{(\pm1,0,0),(0,\pm1,0),(\pm1,\pm2,0)\}.
$$ 
Let $P_1=\{ (\pm1,0,0)\}, P_2=\{(0\pm1,0)\}$ and
$P_3=\{(\pm1,\pm2,0)\}$, and 
$$
\tau_1=\displaystyle \frac 13\begin{pmatrix} -3&-2&-2\\0&1&4\\0&-2&1\end{pmatrix}, \quad
\tau_2=\displaystyle \frac 13\begin{pmatrix} -2&0&5\\1&3&-1\\-1&0&-2\end{pmatrix}, \quad
 \tau_3=\displaystyle \frac 13\begin{pmatrix} -2&-2&-5\\1&-2&1\\-1&-1&2\end{pmatrix}.
$$		
Note that $\tau_1$ and $\tau_2$ satisfy all conditions in Corollary \ref{cor1}, however 
$$
\tau_3(x) \in R_M(N,3,2) \cup P_2 \quad \text{for any $x \in P_3$}.
$$
Hence we may still apply Corollary \ref{cor1}. Note that the primitive eigen vectors for each $\tau_i$ for $i=1,2$ are $(1,0,0)$ and $(0,1,0)$, respectively.  
Therefore if $a$ is not of the form $Q_M(\pm t,0,0)=2t^2$ or $Q_M(\pm t,\mp t,0)=Q(0,\pm t,0)=5t^2$, then $a$ is also represented by $M$.
Furthermore even if $a$ is one of those forms, $a$ is represented by $M$ except only when $r(a,N) \le4$.   
Assume that $a=2t^2$ or $5t^2$ for a positive integer $t$ and there is an odd prime $p \ge 5$ dividing $t$. Since 
$$
r(2p^2,N)=2p+2-2\left(\frac{-2}p\right), \quad r(5p^2,N)=4p+4-4\left(\frac{-2}p\right), \quad r(125,N)=24 
$$  
by Minkowski-Siegel formula, there is a representation which is not contained in the eigen space for both cases. Therefore 
$a$ is represented by $M$ if $t$ has an odd prime divisor.  Finally $M_2$ is anisotropic and $2,5$ is not represented by $M$. 
Hence every integer of the form $2\cdot 2^{2m}$ or $5\cdot 2^{2n}$ is not represented by $M$.    		
\end{proof}

\begin{lem} \label{lem2}  Assume that a binary form $L=\langle 1,k\rangle$ represents a positive integer $pN$ for an odd prime $p$ not dividing $k$ 
and a positive integer $N$. If $L$ represents $p$ or $r(p^2,L)>2$ then there are positive integers $x,y$ not divisible by $p$ 
such that $x^2+ky^2=pN$.  
\end{lem}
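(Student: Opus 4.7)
The plan is to work in the quadratic order $R=\z[\sqrt{-k}]$, viewing $L=\langle 1,k\rangle$ as the norm form, so that a representation $pN=a^2+kb^2$ corresponds to an element $\alpha=a+b\sqrt{-k}\in R$ of norm $pN$. Since $p$ is odd with $p\nmid k$ and $pN\equiv 0\pmod p$, the coordinates in any representation of $pN$ are either both divisible by $p$ or both coprime to $p$; so the goal is to produce some $\beta\in R$ of norm $pN$ with $p\nmid\beta$.

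The hypothesis supplies, in each case, a principal ideal of norm $p^2$ in $R$ distinct from $(p)$. The prime $p$ splits in $R$ as $\mathfrak{p}\bar{\mathfrak{p}}$ since $p\nmid 2k$. If $L$ represents $p$, then $\mathfrak{p}$ is principal with generator $u+v\sqrt{-k}$ where $p=u^2+kv^2$, and $\sigma:=(u+v\sqrt{-k})^2$ generates $\mathfrak{p}^2$. If instead $r(p^2,L)>2$, then beyond the two representations $(\pm p,0)$ coming from the generators of $(p)$ there is a further representation $p^2=s^2+kt^2$ with $t\ne 0$, and $s+t\sqrt{-k}$ generates $\mathfrak{p}^2$ (relabeling $\mathfrak{p},\bar{\mathfrak{p}}$ if necessary). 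In both cases I call this generator $\sigma$, with conjugate $\bar\sigma$ generating $\bar{\mathfrak{p}}^2$.

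Writing $\alpha=p^\ell\gamma$ where $p^\ell$ is the largest power of $p$ dividing $\alpha$ in $R$, I get $\gamma\in R$ with $p\nmid\gamma$, so at most one of $\mathfrak{p},\bar{\mathfrak{p}}$ divides $\gamma$. Setting $\beta=\sigma^\ell\gamma$ if $\bar{\mathfrak{p}}\nmid\gamma$, or $\beta=\bar\sigma^\ell\gamma$ otherwise, yields an element of $R$ with norm $N(\beta)=(p^2)^\ell\cdot N(\gamma)=p^{2\ell}\cdot(pN/p^{2\ell})=pN$, and a valuation check at $\mathfrak{p}$ and $\bar{\mathfrak{p}}$ confirms $p\nmid\beta$. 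Writing $\beta=X+Y\sqrt{-k}$ and taking absolute values produces positive integers $x,y$ with $x^2+ky^2=pN$ and $p\nmid x,y$ (both are nonzero because $y=0$ would force $p\mid X$, contradicting $p\nmid\beta$).

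The main subtlety is justifying this ideal-theoretic setup when $R=\z[\sqrt{-k}]$ is not the maximal order; this is fine because $p\nmid 2k$ makes $p$ coprime to the conductor, so $\mathfrak{p},\bar{\mathfrak{p}}$ are invertible and factorization at $p$ is Dedekind-like. A purely elementary proof is also available, replacing ideal multiplication by the Brahmagupta--Fibonacci identity $(u^2+kv^2)(c^2+kd^2)=(uc\pm kvd)^2+k(ud\mp vc)^2$ (and its analog with $s^2+kt^2$ in case~(b)): choosing $\ell$ minimal so that $a=p^\ell a',b=p^\ell b'$ with $(a',b')\not\equiv(0,0)\pmod p$, two iterated compositions produce two explicit candidate representations of $pN$, and a short mod-$p$ calculation (using $u^2-kv^2\equiv 2u^2\pmod p$ together with $p\nmid 2u$) shows both candidates fail to be coprime to $p$ only when $p\mid a'$ and $p\mid b'$, contradicting the choice of $\ell$.
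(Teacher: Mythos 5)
Your proof is correct, but it takes a genuinely different route from the paper's. The paper works entirely with the composition identity: it introduces the set $\Phi_p(L)$ of integers admitting a representation with both coordinates prime to $p$, shows that $S,T\in\Phi_p(L)$ with $ST\equiv0\pmod p$ forces $ST\in\Phi_p(L)$ (via $(AC\pm kBD)^2+k(AD\mp BC)^2=ST$ and a choice of sign), extracts $p^2\in\Phi_p(L)$ from the hypothesis, and then runs an explicit induction on the exact powers of $p$ dividing the coordinates of a given representation of $pN$. Your main argument packages that same composition as ideal multiplication in $\z[\sqrt{-k}]$ and replaces the descent by a single valuation computation: strip the element $p^\ell$ from $\alpha$ and reinsert $\sigma^\ell$ or $\bar\sigma^\ell$. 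This is conceptually cleaner --- the sign choices of the elementary proof become automatic valuation bookkeeping at $\mathfrak p$ and $\bar{\mathfrak p}$ --- at the cost of having to justify unique factorization above $p$ in a possibly non-maximal order, which you handle correctly by coprimality to the conductor; your closing sketch of an ``elementary variant'' is essentially the paper's proof. Two small repairs: the assertion that $p$ splits ``since $p\nmid 2k$'' is not by itself a valid reason ($p$ could be inert); what forces splitting is the hypothesis, which in either case yields $s^2+kt^2=p$ or $=p^2$ with $p\nmid t$, so that $-k$ is a nonzero square mod $p$. And for $X\ne0$ you should note that $X=0$ would give $pN=kY^2$ with $p\nmid kY$, a contradiction, symmetric to your treatment of $Y=0$.
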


\begin{proof} When $L$ represents $p$, the proof was given by Jones in his unpublished Ph. D. dissertation (see also \cite{j}).
Though the proof of the remaining case is quite similar to this, we present the proof for the completeness.
First define 
$$
\Phi_p(L)=\{ a : \text{$x^2+ky^2=a$ has an integer solution $x,y$ such that $\gcd(xy,p)=1$}\}.
$$  
Note that if $a \in \Phi_p(L)$, then $at^2 \in \Phi_p(L)$ for any positive integer $t$ not divisible by $p$ and  $ka \in \Phi_p(L)$.
Assume that $S,T \in \Phi_p(L)$ and $ST \equiv 0 \pmod p$. 
Let
$$
A^2+kB^2=S, \qquad C^2+kD^2=T  \qquad \text{and} \quad ABCD \not \equiv 0 \pmod p. 
$$
Then $(AC\pm kBD)^2+k(AD\mp BC)^2=ST$ and at least one of $AC+kBD$ or $AC-kBD$ is not divisible by $p$. Hence $ST \in \Phi_p(L)$.  

Now from the assumption we know that $p^2 \in \Phi_p(L)$, that is, there are integers  $x,y$  such that $x^2+ky^2=p^2$ with $\gcd(xy,p)=1$. 
Assume that $A^2+kB^2=pN$ and $A \equiv B \equiv 0 \pmod p$. If one of $A$ or $B$ is zero, then clearly $pN \in \Phi_p(L)$. 
Assume that $A=p^ma$ and $B=p^nb$ and $ab \not \equiv 0 \pmod p$.  
Without loss of generality we may assume that $m \ge n$. Since  $(p^{m-n}a)^2+kb^2=p^{1-2n}N$,
$$
(p^{m-n}ax\pm kby)^2+k(p^{m-n}ay\mp bx)^2=p^{3-2n}N.
$$
 Now by choosing the sign suitably, we may assume that both terms in the left hand side are not divisible by $p$. Hence $p^{3-2n}N \in \Phi_p(L)$. 
Therefore $pN=p^{2(n-1)}p^{3-2n}N \in \Phi_p(L)$. This completes the proof.  
\end{proof}

\section{Ternary universal sums of pentagonal numbers}
In this section we prove that the remaining seven candidates of ternary sums of pentagonal numbers are universal over $\z$.  

\bigskip

\noindent {\bf (i) $p_5(x)+p_5(y)+6p_5(z)$.} First we show that for every eligible number $k$ of $F(x,y,z)=2x^2+4y^2+8z^2+2xy+2yz$,
$F(x,y,z)=k$ has an integral solution $(a,b,c) \in \z^3$ with $a \not \equiv c \pmod 2$ unless $k \equiv 0 \pmod 4$. Note that 
the class number of $F$ is $1$ and hence $F(x,y,z)=k$ has an integer solution. Suppose that every solution $(a,b,c)$ satisfies
$a \equiv c \pmod 2$. Then one may easily check that if $F(a,b,c)=k$, 
$$
F\left(\frac {a-2b-c}2,\frac {a+2b+c}2,-c\right)=k.
$$
Hence if we let $S=\frac 12 \begin{pmatrix}1&-2&-1\\1&2&1\\0&0&-2\end{pmatrix}$, which has an infinite order,
$S^m(a,b,c)^t$ is also an integer solution for any positive integer $m$. Therefore if $(a,b,c) \ne (t,-2t,7t)$
for any $t \in \z$, this is a contradiction by Corollary \ref{cor1}. Finally note that 
$$
F(t,-2t,7t)=F(2t,5t,5t)
$$
for any $t \in \z$. Hence if $k \not \equiv 0 \pmod 4$, then such a solution exists always.

Now let $(a,b,c)$ be an integer solution with $a \not \equiv c \pmod 2$ of the equation
$F(x,y,z)=6n+2$ for a non negative integer $n$. Note that every positive integer of the form $6n+2$ is an eligible integer of $F$.
Let 
$$
d=a+5c, \quad e=-b+c, \quad f=c.
$$
Then $6n+2=F(a,b,c)=2d^2+4e^2+54f^2-2de-18df$. Therefore
$$
\begin{array} {ll}
24n+8=4F(a,b,c)&=8d^2+16e^2+216f^2-8de-72df\\
&=d^2+(d-4e)^2+6(d-6f)^2.\\
\end{array}
$$
Since $d$ is odd,  the integers $d, d-4e$ and $d-6f$ are relatively prime to $6$. Therefore 
$24n+8=x^2+y^2+6z^2$ has an integer solution $(x,y,z)$ with $\gcd(xyz,6)=1$. 

\bigskip

\noindent{\bf (ii) $p_5(x)+p_5(y)+8p_5(z)$.} Let 
$$
M=\begin{pmatrix} 4&2&2\\2&5&1\\2&1&10\end{pmatrix}, \quad N=\begin{pmatrix} 5&1&2\\1&5&-2\\2&-2&8\end{pmatrix}.
$$
Then we have $R(N,6,5)-R_M(N,6,5)=\{ (\pm2,3,0),(3,\pm2,0)\}$. Let $P_1=\{ (\pm2,3,0)\}$, $P_2=\{(3,\pm2,0\}$, and
$$
\tau_1=\displaystyle \frac 16\begin{pmatrix} -6&-4&-2\\0&6&-6\\0&4&2\end{pmatrix}, \quad
 \tau_2=\displaystyle \frac 16\begin{pmatrix} -6&0&-6\\4&6&-2\\4&0&-2\end{pmatrix}.
$$
Then one may easily show that this information satisfies all conditions in the Corollary \ref{cor1} and the eigen vectors for each $\tau_i$ are
$z_1=(1,0,0)$ and $z_2=(0,1,0)$. Clearly $Q(z_1)=Q(z_2)=5$ is represented by $M$. Consequently $S_{6,5} \cap Q(N) \subset Q(M)$. 
Note that $h(N)=1, h(M)=4$ and every positive integer of the form $12n+5$ is an eligible integer of $N$. Therefore $M$ represents
every positive integer of the form $12n+5$. Now consider the following equation
$$
\begin{array} {ll}
24n+10&=x^2+(x-6y)^2+8(x-2z)^2 \\
&=10x^2+36y^2+32z^2-12xy-32xz=f(x,y,z).  
\end{array}
$$ 
Since $f$ is isometric to $2M$, the above equation has an integer solution $(x,y,z)$. Since
all of $x, x-6y, x-2z$ are relatively prime to $6$, the equation $24n+10=x^2+y^2+8z^2$ has an integer solution
$(x,y,z)$ with $\gcd(xyz,6)=1$. 

\bigskip

\noindent {\bf (iii) $p_5(x)+p_5(y)+9p_5(z)$.} 	Note that every positive integer of the form $24n+11$ is represented by $\langle 1\rangle \perp \begin{pmatrix} 9&3\\3&10\end{pmatrix}$
by Theorem \ref{thm1}. Therefore $24n+11=x^2+(z-3y)^2+9z^2$ has always an integer solution. Note that $\gcd(x(z-3y)z,6)=1$. 

\bigskip

\noindent {\bf (iv) $p_5(x)+p_5(y)+10p_5(z)$.} Let $M=\langle 1,4,5\rangle$ and $N=\langle 1,1,20\rangle$. Then one may easily show by computation
that $N \prec_{6,0} M$. Since $\gen(M)=\{M,N \}$, $M$ represents every positive integer of the form $12n+6$. Let $a,b,c$ be integers such that
$12n+6=a^2+4b^2+5c^2$. Clearly $a$ and $c$ are odd. Furthermore since $9=2^2+5\cdot 1^2$, 
we may further assume that $c$ is not divisible by $3$ by Lemma \ref{lem2}. Now if $x=a+2b, y=a-2b$ and $z=c$, then
$$
24n+12=2a^2+8b^2+10c^2=(a+2b)^2+(a-2b)^2+10c^2=x^2+y^2+10z^2.
$$  
Since exactly one of $a$ and $b$ is divisible by $3$, $\gcd(xyz,6)=1$.

\bigskip

\noindent {\bf (v) $p_5(x)+2p_5(y)+8p_5(z)$.}   Let 
$$
M=\begin{pmatrix} 1&0&0\\0&8&4\\0&4&10\end{pmatrix}, \quad 		
N=\begin{pmatrix} 3&1&1\\1&3&-1\\1&-1&9\end{pmatrix}, \quad
L=\begin{pmatrix} 1&0&0\\0&10&2\\0&2&58\end{pmatrix}.
$$
Note that $\gen(M)=\{M,N\}$ and $S_{24,11} \subset Q(\gen(M))$. Since $N \prec_{24,11} M$, 
$M$ represents every integer of the form $24n+11$. Furthermore since $M \prec_{24,11} L$, $L$ also represents
every positive integer of that form. Therefore  for any non negative integer $n$, there is an integer $a,b,c$ such that
$$
24n+11=a^2+10b^2+4bc+58c^2=a^2+2(b+5c)^2+8(b-c)^2.
$$ 
If $x=a, y=b+5c, z=b-c$ then one may easily show that $24n+11=x^2+2y^2+8z^2$ and $\gcd(xyz,6)=1$. 

\bigskip

\noindent {\bf (vi)  $p_5(x)+3p_5(y)+7p_5(z)$.} Let 
$$
M=\begin{pmatrix} 1&0&0\\0&3&0\\0&0&7\end{pmatrix}, \quad
N=\begin{pmatrix} 2&0&1\\0&3&0\\1&0&4\end{pmatrix}, \quad
L=\begin{pmatrix} 4&1&0\\1&7&0\\0&0&7\end{pmatrix}. 		
$$
Note that $\gen(M)=\{M,N\}$ and $S_{24,11} \subset Q(\gen(M))$. Since $N \prec_{8,3} M$,   
$M$ represents every positive integer of the form $24n+11$. Computation shows that
$$
R(M,24,11)-R_L(M,24,11)=\{ (4a,6b+3,\pm4a) \in (\z/24\z)^3: a \not \equiv 0 \pmod 3\}.
$$ 
By letting this set to $P_1$ and $\tau=\frac18\begin{pmatrix}1&0&21\\0&8&0\\-3&0&1\end{pmatrix}$, we may apply 
Corollary \ref{cor1}. Note that the eigen vector of $\tau$ is $(0,1,0)$, which is not in $R(N,24,11)$.  
Therefore every positive integer of the form $24n+11$ is represented by $L$. Let $a,b,c$ be integers such that
$$
24n+11=4a^2+7b^2+7c^2+2ab=(a-2b)^2+3(a+b)^2+7c^2.
$$ 
Now let $d=a-2b, e=a+b, f=c$. Note that $def \not \equiv 0 \pmod3$. By comparing both sides with modulo $8$, we have
$$
(d,e,f) \equiv (0,1,0), (1,1,1) \  \ \text{or} \  \ (0,0,1) \pmod 2.
$$
If all of $d,e$ and $f$ are odd, then there is an integer solution $x,y,z$ such that
$x^2+3y^2+7z^2=24n+11$ with $\gcd(xyz,6)=1$. Assume that $d$ and $e$ are all even. If $d=2g, e=2h$, then clearly $g \not \equiv h \pmod 2$. Furthermore
$$
d^2+3e^2+7f^2=(g\pm 3h)^2+3(g\mp h)^2+7f^2.
$$
Note that at least one of $g+h$ or $g-h$ is not divisible by $3$.  Therefore there is an integer solution $x,y,z$ such that
$x^2+3y^2+7z^2=24n+11$ with $\gcd(xyz,6)=1$. 

Finally assume that $d$ and $f$ are all even. In this case  $d \equiv f \pmod 4$ and $\gcd(e,6)=1$. 
Let $d=2i$ and $f=2j$ and assume that $i\equiv j \equiv 1 \pmod 2$. Then
$$
d^2+3e^2+7f^2=\left(\frac {3i\pm7j}2\right)^2+3e^2+7\left(\frac{i\mp 3j}2\right)^2.
$$
Note that at least one of $\frac {i-3j}2$ and $\frac{i+3j}2$ is odd and both of them is not divisible by $3$. For the remaining case, that is
$i \equiv j \equiv 0 \pmod 2$, one may use the following fact to reach the same consequence: since
$$
4(A^2+7B^2)=\left(\frac {3A\pm7B}2\right)^2+7\left(\frac{A\mp 3B}2\right)^2,
$$
there are odd integers $\alpha$ and $\beta$ such that $4(A^2+7B^2)=\alpha^2+7\beta^2$ under the assumption that $A,B$ are all odd. If $A \not \equiv B \pmod 2$,
then  there are odd integers $\alpha'$ and $\beta'$ such that $16(A^2+7B^2)=(\alpha')^2+7(\beta')^2$.      

\bigskip

\noindent {\bf (vii)  $p_5(x)+3p_5(y)+8p_5(z)$.} We first show that for any non negative integer $n$, there are integers $a,b,c$ such that
$$
2n+1=a^2+b^2+2c^2, \quad b\not \equiv c \pmod 2, \ \ a+b \not \equiv c \pmod 3 \ \ \text{and} \ \ b\not \equiv c \pmod 3.
$$
Note that $2n+1=a^2+b^2+2c^2$ has always a solution $(a,b,c) \in \z^3$.  
First assume that $n$ is even. Since $\langle 1,1,8\rangle$ represents $2n+1$ in this case, we may assume that $b$ is odd and $c$ is even. 
Since $b^2+2c^2 \ne 0$, we may assume that $b$ or $c$ is not divisible by $3$ by Lemma \ref{lem2}. Therefore we may find
integers $a,b,c$ satisfying all conditions given above by suitably choosing signs in the equation $2n+1=(\pm a)^2+(\pm b)^2+2(\pm c)^2$.
If $n$ is odd, we may assume that $b$ is even and $c$ is odd. 
Since the rest part of the proof is quite similar to the above, we are left to the readers. 

Now assume that $a,b,c$ are integers satisfying the above conditions. If $d=a, e=a+b+2c, f=c-a$, then
$$
\begin{array} {ll}
24n+12&=12(a^2+b^2+2c^2)=12(d^2+(-3d+e-2f)^2+2(d+f)^2)\\
&=(e-4d)^2+3e^2+8(e-4d-3f)^2.\end{array}
$$
Note that
$$
e+f\equiv 1 \pmod 2, \  \ e\not \equiv 0 \pmod 3 \ \ \text{and}  \ \  d-e \not \equiv 0 \pmod 3.
$$
Therefore if $x=e-4d, y=e, z=e-4d-3f$, then $24n+12=x^2+3y^2+8z^2$ and $\gcd(xyz,6)=1$.

\end{document}